\newcounter{alphthm}
\newtheorem{thm}{Theorem}
\newtheorem{cor}{Corollary}
\newcommand{\be}{\begin{equation}}
\newcommand{\ee}{\end{equation}}
\newcommand{\ben}{\begin{enumerate}}
\newcommand{\een}{\end{enumerate}}
\def\beq{\begin{equation}}
\def\eeq{\end{equation}}
\begin{document}
\title{Weakly symmetry of a class of $g$-natural metrics on tangent bundles}
\author{E. Peyghan}

\maketitle
\begin{abstract}
Considering the class $G$ of $g$-natural metrics on the tangent bundle of a Riemannian manifold $(M, g)$, it is shown
 that the flatnees for $g$ is a necessary and sufficient condition of weakly symmetry (recurrent or pseudo-symmetry) of $G$. In particular, the cases of weakly symmetric Sasakian lift metric studied by Bejan and Crasmareanu and recurrent or pseudo-symmetric Sasakian lift metric studied by Binh and Tam\'{a}ssy are obtained.      
\end{abstract}
\textbf{Keywords:} $g$-natural metric, Weakly symmetric Riemannian manifold.
\section{Introduction}
In \cite{TB}, Tam\'{a}ssy and Binh introduced the notation of weakly symmetric Riemannian manifold which is a stronger variant of recurrent and pseudo-symmetric manifolds. Then they studied the weak symmetries of Einstein and Sasakian manifolds in \cite{TB1}. Recent studies show that the notion of weakly symmetry has an important role in Riemannian geometry \cite{BC}-\cite{UL}. 

In \cite{BC}, Bejan and Crasmarenu considered the Sasakian lift $g^s$ to the tangent bundle of a Riemannian manifold $(M, g)$  and proved that the weakly symmetry of $g^s$ is equivalent to the flatness for $g$ and $g^s$. Indeed, they extended the result obtained by Tam\'{a}ssy and Binh \cite{BT} for recurrent and pseudo-symmetric manifolds. Moreover, in \cite{BC} the authors provided the following open problem: to extend the present result to other classes of metrics on tangent bundles. To solving of this open problem, we consider the metric $G=ag^s+bg^h+cg^v$ ($a$, $b$, $c$ are constants) which is a class of $g$-natural metrics introduced by Abbassi and Sarih in \cite{AS} and we show that $(TM, G)$ is weakly symmetric   
(recurrent or pseudo-symmetric) Riemannian manifold if and only if $(M, g)$ is flat.

\section{Preliminaries}
Let $(M, g)$ be a Riemannian manifold with dimension $n\geq3$ and $TM$ its tangent bundle. If we consider coordinate system $x=(x^i)$ on the base manifold $M$ and corresponding coordinates $(x, y)=(x^i, y^i)$ on $TM$, then the metric $g$ has the local coefficients $g_{ij}=g(\frac{\partial}{\partial x^i}, \frac{\partial}{\partial x^j})$. Let $\nabla$ be a Riemannian connection on $M$ with coefficients $\Gamma_{ij}^{k}$
where $1\leq i, j, k\leq n$. The Riemannian curvature tensor is defined
by
\begin{equation*}
R( X, Y)Z=\nabla_{X}\nabla_{Y}Z-\nabla_{Y}\nabla_{X}Z-\nabla_{[X,
Y]}Z , \ \ \ \forall X, Y, Z \in {\cal X}(M).
\end{equation*}
Let $\pi$ the natural projection from $TM$ to $M$.
 Consider $\pi_{*}: TTM \longmapsto TM$ and put
\[
ker{\pi_{*}}_{v}=\{z\in TTM|{\pi_{*}}_{v}( z)=0 \},\ \ \ \ \ \ \ \ \forall v\in
TM.
\]
Then the vertical vector bundle on $M$ is defined by $VTM =
\bigcup_{_{v\in TM}}ker{\pi_{*}}_{v}$. A \textit{horizontal distribution}
  on $TM$ is a
 complementary distribution $HTM$  for $VTM$ on $TTM$. It is clear that $HTM$ is a horizontal vector bundle.
 By definition, we have the decomposition
 \begin{equation}\label{dd}
 TTM =VTM\oplus HTM.
 \end{equation}
Using the induced coordinates $( x^{i}, y^{i})$ on $TM$, we can choose a local field of frames
 $\{\frac{\delta}{\delta x^i}, \frac{\partial}
 {\partial y^{i}}\}$ adapted to the above decomposition namely
 $\frac{\delta}{\delta x^i}\in {\cal X}( HTM)$ and
 $\frac{\partial}{\partial y^{i}}\in {\cal X}( VTM)$ are
  sections of  horizontal and vertical sub-bundles $HTM$ and $VTM$,
  defined by
 \begin{equation}
  \frac{\delta}{\delta x^i}=\frac{\partial}{\partial x^{i}}-y^a\Gamma_{ai}^j\frac{\partial}
 {\partial y^{j}}.\label{decomp2}
 \end{equation}
According to (\ref{dd}), every vector field $\widetilde{X}$ on $TM$ has the decomposition $\widetilde{X}=h\widetilde{X}+v\widetilde{X}$. Moreover, a vector field $X=X^i\frac{\partial}{\partial x^i}$ on $M$ has the vertical lift $X^v=X^i\frac{\partial}{\partial y^i}$ and the horizontal lift $X^h=X^i\frac{\delta}{\delta x^i}$.
\subsection{A class of $g$-natural metrics on tangent bundle}
Let $g$ be a Riemannian metric on a manifold $M$. The Sasaki lift $g^s$ of $g$ is defined by 
\begin{equation*}
\left\{
\begin{array}{cc}
&\hspace{-17mm}g^s_{(x, y)}(X^h, Y^h)=g_x(X, Y),\ \ \ g^s_{(x, y)}(X^h, Y^v)=0,\\\\
&\hspace{-5mm}g^s_{(x, y)}(X^v, Y^h)=0,\ \ \ \ \ \ \ \ \ \ \ \ \ g^s_{(x, y)}(X^v, Y^v)=g_x(X, Y).
\end{array}
\right.
\end{equation*}
Also, the horizontal lift $g^h$ and the vertical lift $g^v$ of $g$ are defined as follows \cite{AS}
\begin{equation*}
\left\{
\begin{array}{cc}
&\hspace{-5mm}g^h_{(x, y)}(X^h, Y^h)=0,\ \ \ \ \ \ \ \ \ \ \ \ g^h_{(x, y)}(X^h, Y^v)=g_x(X, Y),\\\\
&\hspace{-16mm}g^h_{(x, y)}(X^v, Y^h)=g_x(X, Y),\ \ \ g^h_{(x, y)}(X^v, Y^v)=0,\\
\end{array}
\right.
\end{equation*}
\begin{equation*}
\left\{
\begin{array}{cc}
&\hspace{-5mm}g^v_{(x, y)}(X^h, Y^h)=g_x(X, Y),\ \ \ g^v_{(x, y)}(X^h, Y^v)=0,\\\\
&\hspace{-5mm}g^v_{(x, y)}(X^v, Y^h)=0,\ \ \ \ \ \ \ \ \ \ \ \ \ g^v_{(x, y)}(X^v, Y^v)=0.
\end{array}
\right.
\end{equation*}
Now we consider the metric $G=ag^s+bg^h+cg^v$, where $a$, $b$, $c$ are constants. Indeed we can present $G$ as follows
\begin{equation}\label{metr}
\left\{
\begin{array}{cc}
&\hspace{-7mm}G_{(x, y)}(X^h, Y^h)=(a+c)g_x(X, Y),\ \ \ G_{(x, y)}(X^h, Y^v)=bg_x(X, Y),\\\\
&\hspace{-5mm}G_{(x, y)}(X^v, Y^h)=bg_x(X, Y),\ \ \ \ \ \ \ \ \ \ \ \ \ G_{(x, y)}(X^v, Y^v)=ag_x(X, Y).
\end{array}
\right.
\end{equation}
This metric is a class of $g$-natural metrics and it is Riemannian if and only if $a>0$ and $\alpha=a(a+c)-b^2>0$ hold. Also, for $a=1$ and $b=c=0$, the metric $G$ reduces to the Sasaki lift metric (See \cite{AS}). Let $\widetilde{\nabla}$ be the Levi-Civita connection of $G$. Then it is characterized by \cite{AS}
\begin{equation*}
\left\{
\begin{array}{cc}
&\hspace{-12mm}(\widetilde{\nabla}_{X^h}Y^h)|_t=(\nabla_XY)^h|_t+(A(t, X, Y))^h+(B(t, X, Y))^v,\\
&\hspace{-12mm}(\widetilde{\nabla}_{X^h}Y^v)|_t=(\nabla_XY)^v|_t+(C(t, X, Y))^h+(D(t, X, Y))^v,\\
&\hspace{-6mm}(\widetilde{\nabla}_{X^v}Y^h)|_t=(C(t, Y, X))^h+(D(t, Y, X))^v,\ \ (\widetilde{\nabla}_{X^v}Y^v)|_t=0,
\end{array}
\right.
\end{equation*}
for all vector fields $X$, $Y$ on $M$, where $A$, $B$, $C$, $D$ are the tensor fields of type (1, 2) on $M$ defined by 
\begin{align*}
A(t, X, Y)&=-\frac{ab}{2\alpha}[R(X, t)Y+R(Y, t)X],\\
B(t, X, Y)&=\frac{b^2}{\alpha}R(X, t)Y-\frac{a(a+c)}{2\alpha}R(X, Y)t,\\
C(t, X, Y)&=-\frac{a^2}{2\alpha}R(Y, t)X,\ \ \ D(t, X, Y)=\frac{ab}{2\alpha}R(Y,t)X,
\end{align*}
where $t$ is thought as a vector field on $M$ with local expression $t=y^i\frac{\partial}{\partial x^i}$. Moreover, $t^v=y^i\frac{\partial}{\partial y^i}$ is the Liouville vector field and $t^h=y^i\frac{\delta}{\delta x^i}$ is the geodesic spray of the metric $g$.
\begin{thm}\label{TH}
Let $(M, g)$ be a Riemannian manifold and $G$ be the Riemannian metric given by (\ref{metr}) on $TM$ . Then the Riemannian curvature tensor $\widetilde{R}$  of $(TM, G)$  is completely determined by
\begin{align*}
\widetilde{R}(X^v, Y^v)Z^v&=0,\\
\widetilde{R}(X^v, Y^v)Z^h&=\{\frac{a^2}{\alpha}R(X, Y)Z+\frac{a^2}{4\alpha^2}[R(X, t)R(Y, t)Z-R(Y, t)R(X, t)Z]\}^h\\
&\ \ +\{\frac{ab}{\alpha}R(Y, X)Z+\frac{a^3b}{4\alpha^2}[R(Y, t)R(X, t)Z-R(X, t)R(Y, t)Z]\}^v,
\end{align*}
\begin{align*}
\widetilde{R}(X^h, Y^v)Z^v&=\{\frac{a^2}{2\alpha}R(Z, Y)X-\frac{a^4}{4\alpha^2}R(Y, t)R(Z, t)X\}^h\\
&\ \ +\{\frac{a^3b}{4\alpha^2}R(Y, t)R(Z, t)X-\frac{ab}{2\alpha}R(Z, Y)X\}^v,
\end{align*}
\begin{align*}
\widetilde{R}(X^h, Y^h)Z^v&=\{\frac{a^2}{2\alpha}[(\nabla_YR)(Z, t)X-(\nabla_XR)(Z, t)Y]\\
&\ \ +\frac{a^3b}{4\alpha^2}[R(X, t)R(Z, t)Y-R(Y, t)R(Z, t)X]\}^h\\
&\ \ +\{R(X, Y)Z+\frac{ab}{2\alpha}[(\nabla_XR)(Z, t)Y-(\nabla_YR)(Z, t)X]\\
&\ \ +\frac{a^2}{4\alpha}[R(X, R(Z, t)Y)t-R(Y, R(Z, t)X)t]\\
&\ \ +\frac{a^2b^2}{4\alpha^2}[R(Y, t)R(Z, t)X-R(X, t)R(Z, t)Y]\}^v,
\end{align*}
\begin{align*}
\widetilde{R}(X^h, Y^h)Z^h&=\{R(X, Y)Z+\frac{ab}{2\alpha}[2(\nabla_tR)(X, Y)Z-(\nabla_ZR)(X, Y)t]\\
&\ \ +\frac{a^2}{4\alpha}[R(R(Y, Z)t, t)X-R(R(X, Z)t, t)Y]\\
&\ \ +\frac{a^2b^2}{4\alpha^2}[R(X, t)R(Y, t)Z+R(X, t)R(Z, t)Y\\
&\ \ -R(Y, t)R(X, t)Z-R(Y, t)R(Z, t)X]-\frac{a^2}{2\alpha}R(R(X, Y)t, t)Z\}^h\\
&\ \ +\{-\frac{b^2}{\alpha}(\nabla_t R)(X, Y)Z+\frac{a(a+c)}{2\alpha}(\nabla_ZR)(X, Y)t\\
&\ \ +\frac{ab^3}{2\alpha^2}[R(R(Y, t)Z, X)t-R(X, t)R(Z, t)Y\\
&\ \ -R(R(X, t)Z, Y)t+R(Y, t)R(Z, t)X]\\
&\ \ +\frac{a^2b(a+c)}{4\alpha^2}[R(X, R(Y, t)Z)t+R(X, R(Z, t)Y)t\\
&\ \ -R(Y, R(X, t)Z)t-R(Y, R(Z, t)X)t\\
&\ \ -R(R(Y, Z)t, t)X+R(R(X, Z)t, t)Y]\\
&\ \ +\frac{ab}{2\alpha}R(R(X, Y)t, t)Z\}^v,
\end{align*}
\begin{align*}
\widetilde{R}(X^h, Y^v)Z^h&=\{-\frac{a^2}{2\alpha}(\nabla_XR)(Y, t)Z+\frac{a^3b}{4\alpha^2}[R(X, t)R(Y, t)Z\\
&\ \ -R(Y, t)R(Z, t)X-R(Y, t)R(X, t)Z]+\frac{ab}{2\alpha}[R(X, Y)Z\\
&\ \ +R(Z, Y)X]\}^h+\{\frac{ab}{2\alpha}(\nabla_XR)(Y, t)Z-\frac{a^2b^2}{4\alpha^2}[R(X, t)R(Y, t)Z\\
&\ \ -R(Y, t)R(Z, t)X-R(Y, t)R(X, t)Z]+\frac{a^2}{4\alpha}R(X, R(Y, t)Z)t\\
&\ \ -\frac{b^2}{\alpha}R(X, Y)Z+\frac{a(a+c)}{2\alpha}R(X, Z)Y\}^v.
\end{align*}
\end{thm}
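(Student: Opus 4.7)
The plan is to compute each of the five non-trivial components of $\widetilde R$ (the identity $\widetilde R(X^v,Y^v)Z^v=0$ is immediate from $\widetilde\nabla_{X^v}Y^v=0$ and $[X^v,Y^v]=0$) by inserting the Levi-Civita formulas recalled just before the statement into the definition
\[
\widetilde R(\widetilde X,\widetilde Y)\widetilde Z = \widetilde\nabla_{\widetilde X}\widetilde\nabla_{\widetilde Y}\widetilde Z - \widetilde\nabla_{\widetilde Y}\widetilde\nabla_{\widetilde X}\widetilde Z - \widetilde\nabla_{[\widetilde X,\widetilde Y]}\widetilde Z,
\]
and then separating horizontal and vertical components. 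By tensoriality it suffices to test on the six independent patterns of lifts $(\widetilde X,\widetilde Y,\widetilde Z)=(X^\natural,Y^\natural,Z^\natural)$ with $\natural\in\{h,v\}$, one of which is trivial.

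Before starting the computation I record the bracket identities on $TM$,
\[
[X^v,Y^v]=0,\qquad [X^h,Y^v]=(\nabla_XY)^v,\qquad [X^h,Y^h]=[X,Y]^h-(R(X,Y)t)^v,
\]
together with the two derivation rules that govern how to differentiate an $M$-tensor $\Phi(t,Y,Z)$ linear in the $t$-slot: along $X^v$ the $t$-slot is replaced by $X$ and the connection corrections supplied by the formulas for $\widetilde\nabla_{X^v}(\,\cdot\,)$ are added; along $X^h$ the $t$-slot is carried as horizontally parallel, the arguments $Y,Z$ are differentiated through $\nabla$, and the connection corrections supplied by the formulas for $\widetilde\nabla_{X^h}(\,\cdot\,)$ are added. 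These two rules are obtained once and for all by writing $\Phi(t,Y,Z)=y^i\Phi(\partial_i,Y,Z)$, applying the Leibniz rule, and using $X^v(y^i)=X^i$ and $X^h(y^i)=-y^a X^k\Gamma^i_{ak}$; since $A,B,C,D$ are all linear in $t$ and polynomial in $R$ and $\nabla R$, these two rules cover every term that arises.

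The execution is then purely mechanical. For each of the patterns $(X^v,Y^v,Z^h)$, $(X^h,Y^v,Z^v)$, $(X^h,Y^v,Z^h)$, $(X^h,Y^h,Z^v)$ and $(X^h,Y^h,Z^h)$, I expand $\widetilde\nabla_{\widetilde Y}\widetilde Z$ by the Levi-Civita formulas, apply $\widetilde\nabla_{\widetilde X}$ to the resulting horizontal and vertical pieces via the two rules above (so that each inner $A,B,C,D$ spawns both a horizontal and a vertical contribution on the outer level), subtract the swap in $\widetilde X,\widetilde Y$, and finally subtract $\widetilde\nabla_{[\widetilde X,\widetilde Y]}\widetilde Z$, which is non-trivial only in the $(X^h,Y^h,\cdot)$ cases because of the extra $-(R(X,Y)t)^v$ piece of the bracket. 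The only algebraic simplifications needed are the symmetries of $R$ and the second Bianchi identity, the latter being used to re-cast expressions of the form $(\nabla_XR)(\cdot,t)\cdot-(\nabla_YR)(\cdot,t)\cdot$ into the shapes displayed in the theorem.

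\textbf{Main obstacle.} The case $\widetilde R(X^h,Y^h)Z^h$ is the true bottleneck: all four tensors $A,B,C,D$ are activated at both levels, each one produces both a horizontal and a vertical offspring under the outer covariant derivative, and the resulting expansion contains on the order of forty terms that must be sorted by functional type ($R$, $\nabla R$, $R\circ R$, $R(R(\cdot,\cdot)t,t)\cdot$, $R(\cdot,R(\cdot,t)\cdot)t$) and then combined using the symmetries of $R$. Nothing here is conceptually subtle, but the sign and coefficient bookkeeping is heavy; as a sanity check I will specialise to $a=1$, $b=c=0$, where the formulas must collapse to the classical Sasaki curvature identities.
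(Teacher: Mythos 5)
Your strategy is sound and would indeed establish the theorem, but it is a genuinely different route from the paper's: the paper gives no computation at all, disposing of the statement in one line by observing that it is the special case (constant coefficients $\alpha_1=a$, $\alpha_2=b$, $\alpha_3=c$, $\beta_i=0$) of Proposition 2.9 of Abbassi--Sarih \cite{AS}, where the curvature of a general $g$-natural metric is computed. What you propose is to redo that computation from scratch for this particular metric, and your scaffolding is the correct one: the three bracket identities are right, the reduction to six lift patterns (one trivial) by tensoriality and antisymmetry is right, and the two differentiation rules for $t$-dependent tensors (replace the $t$-slot by $X$ under $X^v$, carry $t$ as parallel and apply $\nabla_X$ plus the $A,B,C,D$ corrections under $X^h$) are exactly the standard mechanism; one can check they are consistent with the stated connection, e.g.\ torsion-freeness of $\widetilde\nabla$ recovers $[X^h,Y^h]=[X,Y]^h-(R(X,Y)t)^v$ via the first Bianchi identity and $\alpha=a(a+c)-b^2$. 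What the citation buys is brevity and the authority of a published general formula; what your computation buys is a self-contained, verifiable derivation that is considerably lighter than the general $g$-natural case precisely because $a$, $b$, $c$ are constants (no derivatives of the coefficient functions appear), plus the useful consistency check against the classical Sasaki curvature at $a=1$, $b=c=0$. The only caveat is that what you have written is a plan rather than the executed computation: the forty-odd terms of the $(X^h,Y^h,Z^h)$ case still have to be produced and matched against the displayed formula before the proof is complete.
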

\begin{proof}
The proof is an special case of the proof of Proposition 2.9 of \cite{AS}.
\end{proof}
\section{Weakly symmetric Riemannian manifold $(TM, G)$}
Let $(M, g)$ be a Riemannian manifold. If there exist 1-forms $\alpha_1$, $\alpha_2$, $\alpha_3$, $\alpha_4$ and a vector field $A$ on $M$ such that
\begin{align*}
(\nabla_WR)(X, Y, Z)&=\alpha_1(W)R(X, Y)Z+\alpha_2(X)R(W, Y)Z+\alpha_3(Y)R(X, W)Z\nonumber\\
&\ \ \ +\alpha_4(Z)R(X, Y)W+g(R(X, Y)Z, W)A,
\end{align*}
then $(M, g)$ is called weakly symmetric. In \cite{DB}, the authors proved that the relations $\alpha_2=\alpha_3=\alpha_4$ and $A_2=(\alpha_2)^\sharp$ are necessary conditions to weakly symmetry of $g$. Thus a weakly symmetric manifold $(M, g)$ is characterized by:
\begin{align}\label{weak}
(\nabla_WR)(X, Y, Z)&=\alpha_1(W)R(X, Y)Z+\alpha_2(X)R(W, Y)Z+\alpha_2(Y)R(X, W)Z\nonumber\\
&\ \ \ +\alpha_2(Z)R(X, Y)W+g(R(X, Y)Z, W)(\alpha_2)^\sharp.
\end{align}
\begin{thm}
Let $(M, g)$ be a Riemannian manifold and $TM$ be its tangent bundle with Riemannian metric $G$ given by (\ref{metr}). Then $(TM, G)$ is weakly symmetric if and only if $(M, g)$ is flat. Hence $(TM, G)$ is flat.
\end{thm}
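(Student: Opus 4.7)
The $(\Leftarrow)$ direction will be immediate from Theorem~\ref{TH}: if $(M,g)$ is flat then every summand appearing in the formulas for $\widetilde R$ is a contraction involving $R$ or $\nabla R$ and hence vanishes, so $(TM,G)$ is flat and \emph{a fortiori} weakly symmetric. For the nontrivial direction my plan is to assume \eqref{weak} on $(TM,G)$ with some one-forms $\widetilde\alpha_1,\widetilde\alpha_2$, substitute horizontal and vertical lifts of vector fields on $M$ for the four arguments, and restrict each resulting identity to the zero section $y=0$, i.e.\ $t=0$. This restriction collapses Theorem~\ref{TH} dramatically: every term containing $R(\cdot,t)$, $R(\cdot,t)R(\cdot,t)$, $\nabla_t R$ or $(\nabla R)(\cdot,t)$ vanishes, and the connection tensors $A,B,C,D$, being linear in $R(\cdot,t)$, all vanish as well, so $\widetilde\nabla$ acts on lifted fields essentially as $(\nabla)^h$ or $(\nabla)^v$.

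The first substitution I would try is $\widetilde W=W^v$, $\widetilde X=X^v$, $\widetilde Y=Y^v$, $\widetilde Z=Z^h$. Using $\widetilde\nabla_{U^v}V^v=0$ and $\widetilde\nabla_{U^v}V^h|_{t=0}=0$, together with the fact that the $t$-dependent part of $\widetilde R(X^v,Y^v)Z^h$ is quadratic in $t$, the left-hand side of \eqref{weak} becomes zero at $t=0$. On the right-hand side the term $\widetilde\alpha_2(Z^h)\widetilde R(X^v,Y^v)W^v=0$ drops, and the inner product $G(\widetilde R(X^v,Y^v)Z^h,W^v)$ vanishes at $t=0$ by the antisymmetry $R(X,Y)=-R(Y,X)$. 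Separating horizontal and vertical components, I expect to obtain the $M$-intrinsic identity
\[
\widetilde\alpha_1(W^v)R(X,Y)Z+\widetilde\alpha_2(X^v)R(W,Y)Z+\widetilde\alpha_2(Y^v)R(X,W)Z=0.
\]
Setting $W=X$ and using $R(X,X)=0$ reduces this to $(\widetilde\alpha_1+\widetilde\alpha_2)(X^v)R(X,Y)Z=0$, and cyclic manipulations combined with the first Bianchi identity should peel off $\widetilde\alpha_1,\widetilde\alpha_2$ and force $R\equiv 0$ pointwise. If residual freedom remains, I would supplement with the all-horizontal substitution $\widetilde W=W^h$, $\widetilde X=X^h$, $\widetilde Y=Y^h$, $\widetilde Z=Z^h$, whose vertical component at $t=0$ yields a complementary identity involving $\nabla R$ and the $\widetilde\alpha_i$ evaluated on horizontal lifts, enough to pin down any remaining coefficients. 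Once $R\equiv 0$ on $M$, Theorem~\ref{TH} gives $\widetilde R\equiv 0$, proving flatness of $(TM,G)$ as well.

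The main obstacle will be the bookkeeping in $\widetilde\nabla_{\widetilde W}\widetilde R$: the components $A,B,C,D$ are themselves $t$-dependent, $\widetilde R$ mixes horizontal and vertical parts, and even at $t=0$ one must track both the value and the first-order $y$-dependence, because $W^v$ differentiates in $y$. The decisive simplification is that on the zero section every expression polynomial in $t$ of degree $\ge 2$ contributes nothing to either the value or the first-order jet, so only linear-in-$R$ information survives, and this information turns out to be strong enough to corner $R$.
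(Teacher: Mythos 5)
Your $(\Leftarrow)$ direction is fine, but the main direction has a genuine gap: restricting everything to the zero section $t=0$ discards exactly the information that makes the theorem true. The identity you extract from the substitution $\widetilde W=W^v,\widetilde X=X^v,\widetilde Y=Y^v,\widetilde Z=Z^h$, namely
\[
\widetilde\alpha_1(W^v)R(X,Y)Z+\widetilde\alpha_2(X^v)R(W,Y)Z+\widetilde\alpha_2(Y^v)R(X,W)Z=0 ,
\]
is homogeneous in the one-forms, so it is the tautology $0=0$ whenever $\widetilde\alpha_1$ and $\widetilde\alpha_2$ vanish on vertical lifts --- in particular in the locally symmetric case $\widetilde\alpha_1=\widetilde\alpha_2=0$, which the theorem (and the subsequent Corollary) must cover. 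No amount of ``cyclic manipulation'' can peel the $\widetilde\alpha_i$ off and leave $R\equiv0$; at best you get $R=0$ at points where some combination of the forms is nonzero. Your fallback, the all-horizontal substitution at $t=0$, fares no better: since every $t$-dependent term of Theorem~\ref{TH} and of the connection coefficients $A,B,C,D$ dies on the zero section (and horizontal derivatives of terms that are $O(t)$ still vanish at $t=0$), that substitution collapses to the statement that $(M,g)$ itself satisfies the weak-symmetry identity \eqref{weak} --- which is the hypothesis class, not flatness. So the two identities you propose are jointly consistent with any weakly symmetric, in particular any locally symmetric, nonflat base manifold.

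The information that actually forces $R=0$ lives away from the zero section. The paper's proof takes $\widetilde X=X^h$, $\widetilde Y=Y^v$, $\widetilde Z=Z^v$, $\widetilde W=W^h$, keeps the full $t$-dependence of the vertical component of $(\widetilde\nabla_{W^h}\widetilde R)(X^h,Y^v)Z^v$, and then specializes the \emph{arguments} to the canonical field $t$ (setting $Y=t$, then $Z=t$), rather than setting $t=0$. The decisive relations obtained this way, e.g.
\[
0=\frac{a^2b^2}{2\alpha^2}\,g\bigl(R(X,t)t,\,R(X,t)t\bigr)
\]
when $b\neq0$, and $R(X,R(t,Z)X)t=0$ when $b=0$, are of degree $3$ and $4$ in $t$; they come from the curvature-quadratic terms $R(\cdot,t)R(\cdot,t)$ and from $C(t,\cdot,\cdot)$, all of which you have set to zero. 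Your closing claim that ``only linear-in-$R$ information survives, and this information turns out to be strong enough to corner $R$'' is therefore exactly where the argument breaks: the linear-in-$R$ data at $t=0$ is not strong enough, and the proof must exploit the fiberwise polynomial structure of $\widetilde R$ at general $t$.
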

\begin{proof}
If $R=0$, then from Theorem \ref{TH}, we conclude that $\widetilde{R}=0$ and so we have (\ref{weak}). Now let $(TM, G)$ be a weakly symmetric manifold. Then we have (\ref{weak}) for all vector fields $\widetilde{X}$, $\widetilde{Y}$, $\widetilde{Z}$ and $\widetilde{W}$ on $TM$. If we suppose $\widetilde{X}=X^h$, $\widetilde{Y}=Y^v$, $\widetilde{Z}=Z^v$ and $\widetilde{W}=W^h$, then the right side of (\ref{weak}) has the following vertical part
\begin{align}\label{5}
&v\{\alpha_1(W^h)\widetilde{R}(X^h, Y^v)Z^v+\alpha_2(X^h)\widetilde{R}(W^h, Y^v)Z^v\nonumber\\
&+\alpha_2(Y^v)\widetilde{R}(X^h, W^h)Z^v+\alpha_2(Z^v)R(X^h, Y^v)W^h\nonumber\\
&+G(R(X^h, Y^v)Z^v, W^h)(\alpha_2)^\sharp\}=\{\alpha_1(W^h)[\frac{a^3b}{4\alpha^2}R(Y, t)R(Z, t)X\nonumber\\
&-\frac{ab}{2\alpha}R(Z, Y)X]+\alpha_2(X^h)[\frac{a^3b}{4\alpha^2}R(Y, t)R(Z, t)W-\frac{ab}{2\alpha}R(Z, Y)W]\nonumber\\
&+\alpha_2(Y^v)\{R(X, W)Z+\frac{ab}{2\alpha}[(\nabla_XR)(Z, t)W-(\nabla_WR)(Z, t)X]\nonumber\\
&+\frac{a^2}{4\alpha}[R(X, R(Z, t)W)t-R(W, R(Z, t)X)t]+\frac{a^2b^2}{4\alpha^2}[R(W, t)R(Z, t)X\nonumber\\
&-R(X, t)R(Z, t)W]\}+\alpha_2(Z^v)\{\frac{ab}{2\alpha}(\nabla_XR)(Y, t)W-\frac{b^2}{\alpha}R(X, Y)W\nonumber\\
&-\frac{a^2b^2}{4\alpha^2}[R(X, t)R(Y, t)W-R(Y, t)R(W, t)X-R(Y, t)R(X, t)W]\nonumber\\
&+\frac{a^2}{4\alpha}R(X, R(Y, t)W)t+\frac{a(a+c)}{2\alpha}R(X, W)Y\}\nonumber\\
&+(a+c)[-\frac{a^4}{4\alpha^2}g(R(Y, t)R(Z, t)X, W)+\frac{a^2}{2\alpha}g(R(Z, Y)X, W)]{\alpha_2}^\sharp\nonumber\\
&+b[\frac{a^3b}{4\alpha^2}g(R(Y, u)R(Z, u)X, W)-\frac{ab}{2\alpha}g(R(Z, Y)X, W)]{\alpha_2}^\sharp\}^v.
\end{align}
Now, we compute the vertical part of the left side of (\ref{weak}). Using Theorem \ref{TH} we obtain
\begin{align}\label{1}
v(\widetilde{\nabla}_{W^h}\widetilde{R}(X^h, Y^v)Z^v)&=\{\frac{a^3b}{4\alpha^2}\nabla_W(R(Y, t)R(Z, t)X)-\frac{ab}{2\alpha}\nabla_WR(Z, Y)X\nonumber\\
&\hspace{-2.4cm}+\frac{a^5(a+c)}{8\alpha^3}R(W, R(Y, t)R(Z, t)X)t+\frac{a^3(a+c)}{4\alpha^2}R(W, R(Z, Y)X)t\nonumber\\
&\hspace{-2.4cm}-\frac{a^4b^2}{4\alpha^3}R(W, t)R(Y, t)R(Z, t)X-\frac{a^2b^2}{2\alpha^2}R(W, t)R(Z, Y)X\nonumber\\
&\hspace{-2.4cm}+\frac{a^4b^2}{8\alpha^3}R(R(Y, t)R(Z, t)X, t)W-\frac{a^2b^2}{4\alpha^2}R(R(Z, Y)X, t)W\}^v,
\end{align}
\begin{align}\label{2}
v(\widetilde{R}(\widetilde{\nabla}_{W^h}X^h, Y^v)Z^v)&=\{\frac{a^3b}{4\alpha^2}R(Y, t)R(Z, t)\nabla_WX-\frac{ab}{2\alpha}R(Z, Y)\nabla_WX\nonumber\\
&\hspace{-2.4cm}+\frac{a^3}{4\alpha^2}R(Y, t)R(Z, t)A(t, W, X)-\frac{ab}{2\alpha}R(Z, Y)A(t, W, X)\}^v,
\end{align}
\begin{align}\label{3}
v(\widetilde{R}(X^h, \widetilde{\nabla}_{W^h}Y^v)Z^v)&=\{\frac{ab}{2\alpha}[(\nabla_XR)(Z, t)C(t, W, Y)-(\nabla_{C(t, W, Y)}R)(Z, t)X]\nonumber\\
&\hspace{-2.4cm}+\frac{a^3b}{4\alpha^2}R(\nabla_WY, t)R(Z, t)X-\frac{ab}{2\alpha}R(Z, \nabla_WY)X+R(X, C(t, W, Y))Z\nonumber\\
&\hspace{-2.4cm}+\frac{a^2b^2}{4\alpha^2}[R(C(t, W, Y), t)R(Z, t)X-R(X, t)R(Z, t)C(t, W, Y)]\nonumber\\
&\hspace{-2.4cm}+\frac{a^2}{4\alpha}[R(X, R(Z, t)C(t, W, Y))t-R(C(t, W, Y), R(Z, t)X)t]\nonumber\\
&\hspace{-2.4cm}+\frac{a^3b}{4\alpha^2}R(D(t, W, Y), t)R(Z, t)X-\frac{ab}{2\alpha}R(Z, D(t, W)Y)X\}^v,
\end{align}
\begin{align}\label{4}
v(\widetilde{R}(X^h, Y^v)\widetilde{\nabla}_{W^h}Z^v)&=\{\frac{a^3b}{4\alpha^2}R(Y, t)R(\nabla_WZ, t)X-\frac{ab}{2\alpha}R(\nabla_WZ, Y)X\nonumber\\
&\hspace{-2.4cm}+\frac{a^3b}{4\alpha^2}R(Y, t)R(D(t, W, Z), t)X+\frac{ab}{2\alpha}(\nabla_XR)(Y, t)C(u, W, Z)\nonumber\\
&\hspace{-2.4cm}-\frac{ab}{2\alpha}R(D(t, W, Z), Y)X+\frac{a^2}{4\alpha}R(X, R(Y, t)C(t, W, Z))t\nonumber\\
&\hspace{-2.4cm}-\frac{b^2}{\alpha}R(X, Y)C(t, W, Z)+\frac{a(a+c)}{2\alpha}R(X, C(t, W, Z))Y\nonumber\\
&\hspace{-2.4cm}-\frac{a^2b^2}{4\alpha^2}[R(X, t)R(Y, t)C(t, W, Z)-R(Y, t)R(C(t, W, Z), t)X\nonumber\\
&\hspace{-2.4cm}-R(Y, t)R(X, t)C(t, W, Z)]\}^v.
\end{align}
Using (\ref{1})-(\ref{4}) we have $v((\widetilde{\nabla}_{W^H}\widetilde{R})(X^H, Y^V)Z^V)$. Now we consider the following
\begin{equation}\label{IM}
v((\widetilde{\nabla}_{W^H}\widetilde{R})(X^H, Y^V)Z^V)=(\ref{5}).
\end{equation}
Setting $Y=t$ in the above equation implies
\begin{align}\label{3.6}
&-\frac{ab}{2\alpha}\alpha_1(W^h)R(Z, t)X-\frac{ab}{2\alpha}\alpha_2(X^h)R(Z, t)W+\alpha_2(t^v)\{R(X, W)Z\nonumber\\
&+\frac{ab}{2\alpha}[(\nabla_XR)(Z, t)W-(\nabla_WR)(Z, t)X]+\frac{a^2}{4\alpha}[R(X, R(Z, t)W)t\nonumber\\
&-R(W, R(Z, t)X)t]+\frac{a^2b^2}{4\alpha^2}[R(W, t)R(Z, t)X-R(X, t)R(Z, t)W]\}\nonumber\\
&+\alpha_2(Z^v)[\frac{a(a+c)}{2\alpha}R(X, W)t-\frac{b^2}{\alpha}R(X, t)W]+\frac{a}{2}g(R(Z, t)X, W)\alpha_2^\sharp\nonumber\\
&=\frac{a^2b^2}{2\alpha^2}R(W, t)R(Z, t)X-\frac{a^3(a+c)}{4\alpha^2}R(W, R(Z, t)X)t\nonumber\\
&-\frac{ab}{2\alpha}(\nabla_WR)(Z, t)X-\frac{a^2b^2}{4\alpha^2}R(R(Z, t)X, t)W+\frac{ab}{2\alpha}R(Z, t)A(t, W, X)\nonumber\\
&+\frac{ab}{2\alpha}R(D(t, W, Z), t)X-\frac{a(a+c)}{2\alpha}R(X, C(t, W, Z))t\nonumber\\
&+\frac{b^2}{\alpha}R(X, t)C(t, W, Z).
\end{align}
Similarly, setting $Z=t$ in (\ref{IM}) gives us
\begin{align}
&-\frac{ab}{2\alpha}\alpha_1(W^h)R(u, Y)X-\frac{ab}{2\alpha}\alpha_2(X^h)R(t, Y)W+\alpha_2(Y^v)R(X, W)t\nonumber\\
&+\alpha_2(t^v)\{\frac{ab}{2\alpha}(\nabla_XR)(Y, t)W-\frac{a^2b^2}{4\alpha^2}[R(X, t)R(Y, t)W-R(Y, t)R(W, t)X\nonumber\\
&-R(Y, t)R(X, t)W]+\frac{a^2}{4\alpha}R(X, R(Y, t)W)t+\frac{a(a+c)}{2\alpha}R(X, W)Y\nonumber\\
&-\frac{b^2}{\alpha}R(X, Y)W\}+\frac{a}{2}g(R(t, Y)X, W)\alpha_2^\sharp\nonumber\\
&=\frac{a^2b^2}{2\alpha^2}R(W, t)R(t, Y)X-\frac{a^3(a+c)}{4\alpha^2}R(W, R(t, Y)X)t\nonumber\\
&-\frac{ab}{2\alpha}(\nabla_WR)(t, Y)X-\frac{a^2b^2}{4\alpha^2}R(R(t, Y)X, t)W+\frac{ab}{2\alpha}R(t, Y)A(t, W, X)\nonumber\\
&+\frac{ab}{2\alpha}R(t, D(t, W, Y))X-R(X, C(t, W, Y))t.
\end{align}
Setting $Y=Z$ in the above equation and then summing it with (\ref{3.6}) derive that
\begin{align}\label{N}
& \alpha_2(Z^v)\{-\frac{b^2}{\alpha}R(X, t)W+\frac{a(a+c)+2\alpha}{2\alpha}R(X, W)t\}\nonumber\\
&+\alpha_2(t^v)\{R(X,W)Z+ \frac{ab}{2\alpha}[2(\nabla_XR)(Z, t)W-(\nabla_WR)(Z, t)X]\nonumber\\
&+\frac{a^2}{4\alpha}[2R(X, R(Z, t)W)t-R(W, R(Z, t)X)t]\nonumber\\
&+\frac{a^2b^2}{4\alpha^2}[R(W, t)R(Z, t)X-2R(X, t)R(Z, t)W+R(Z, t)R(W, t)X\nonumber\\
&+R(Z, t)R(X, t)W]-\frac{b^2}{\alpha}R(X, Z)W+\frac{a(a+c)}{2\alpha}R(X,W)Z\} \nonumber\\
&=\frac{b^2}{\alpha}R(X,t)C(t, W, Z)-\frac{a(a+c)+2\alpha}{2\alpha}R(X,C(t,W,Z))t.
\end{align}
Putting $Z=t$ in the above equation we get
\begin{equation}\label{m}
\alpha_2(t^v)[\frac{b^2}{\alpha}R(t, X)W+\frac{a(a+c)+2\alpha}{2\alpha}R(X, W)t]=0.
\end{equation}
Interchanging $X$ and $W$ in the above equation yields
\[
\alpha_2(t^v)\{\frac{b^2}{\alpha}R(t, W)X+\frac{a(a+c)+2\alpha}{2\alpha}R(W, X)t\}=0.
\]
By subtracting the above equation from (\ref{m}) we get
\[
\alpha_2(t^v)\{\frac{b^2}{\alpha}[R(t, X)W+R(W, t)X]+\frac{a(a+c)+2\alpha}{\alpha}R(X, W)t\}=0.
\]
Using Bianchi identity in above relation gives us
\[
\alpha_2(t^v)R(X, W)t=0.
\]
If $\alpha_2(t^v)\neq 0$ we have the conclusion. Now let $\alpha_2(t^v)= 0$, then from (\ref{N}) we have
\begin{align}\label{m1}
&\alpha_2(Z^v)\{\frac{a(a+c)+2\alpha}{2\alpha}R(X, W)t-\frac{b^2}{\alpha}R(X, t)W\}\nonumber\\
&=-\frac{a(a+c)+2\alpha}{2\alpha}R(X, C(t, W, Z))t+\frac{b^2}{\alpha}R(X, t)C(t, W, Z).
\end{align}
Exchanging $X$ and $W$ in the above equation we obtain
\begin{align*}
&\alpha_2(Z^v)\{\frac{a(a+c)+2\alpha}{2\alpha}R(W, X)t-\frac{b^2}{\alpha}R(W, t)X\}\nonumber\\
&=-\frac{a(a+c)+2\alpha}{2\alpha}R(W, C(t, X, Z))t+\frac{b^2}{\alpha}R(W, t)C(t, X, Z).
\end{align*}
By subtracting the above equation from (\ref{m1}) we get
\begin{align}
&\alpha_2(Z^v)\{\frac{a(a+c)+2\alpha}{\alpha}R(X, W)t+\frac{b^2}{\alpha}[R(W, t)X-R(X, t)W]\}\nonumber\\
&=\frac{a(a+c)+2\alpha}{2\alpha}[R(W, C(t, X, Z))t-R(X, C(t, W, Z))t]\nonumber\\
&+\frac{b^2}{\alpha}[R(X, t)C(t, W, Z)-R(W, t)C(t, X, Z)].
\end{align}
Using Bianchi identity in the above relation we conclude
\begin{align}
3\alpha_2(Z^v)R(X, W)t&=\frac{a(a+c)+2\alpha}{2\alpha}[R(W, C(t, X, Z))t\nonumber\\
&\hspace{-20mm}-R(X, C(t, W, Z))t]+\frac{b^2}{\alpha}[R(X, t)C(t, W, Z)\nonumber\\
&\hspace{-20mm}-R(W, t)C(t, X, Z)].
\end{align}
Now, we take the $g-$product with $t$
\begin{align}
&0=\frac{a(a+c)+2\alpha}{2\alpha}[g(R(W, C(t, X, Z))t, t)-g(R(X, C(t, W, Z))t, t)]\nonumber\\
&+\frac{b^2}{\alpha}[g(R(X, t)C(t, W, Z), t)-g(R(W, t)C(t, X, Z), t)]\nonumber\\
&=-\frac{a^2b^2}{2\alpha^2}[g(R(X, t)R(Z, t)W, t)-g(R(W, t)R(Z, t)X, t)].
\end{align}
Setting $W=t$ and $Z=X$ in the above equation we obtain
\[
0=\frac{a^2b^2}{2\alpha^2}g(R(X, t)t, R(X, t)t)
\]
If $b\neq0$ then the above equation yields $R(X, t)t=0$ which gives us $R=0$. Now let $b=0$. In this case we have $\alpha=a(a+c)$ and then from (\ref{m1}) we get
\[
\alpha_2(Z^v)R(X, W)t=-R(X, C(t, W, Z))t.
\]
Setting $W=X$ in the above equation gives us
\[
R(X, C(t, W, Z))t=0,
\]
and consequently
\[
\frac{a^2}{2\alpha}R(X, R(t, Z)X)t=0.
\]
Taking the $g-$product with $Z$ we have, $g(R(X, R(t, Z)X)t,Z)$ which gives us
\[
R(t, Z)X=0.
\]
Thus $R=0$, i.e. $(M, g)$ is flat.
\end{proof}
For $\alpha_2=0$ respectively $\alpha_1=2\alpha_2$ in (\ref{weak}) we get the following result
\begin{cor}
Let $(M, g)$ be a Riemannian manifold and $TM$ be its tangent bundle with Riemannian metric $G$ given by (\ref{metr}). Then $(TM, G)$ is recurrent or pseudo-symmetric or locally symmetric if and only if $(M, g)$ is flat. Hence $(TM, G)$ is flat.
\end{cor}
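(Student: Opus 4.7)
The plan is to prove the biconditional. The \emph{if} direction is immediate: if $R=0$, then inspecting the explicit formulas for $\widetilde{R}$ in Theorem \ref{TH} shows every component of $\widetilde{R}$ vanishes, so $\widetilde{\nabla}\widetilde{R}=0$ and (\ref{weak}) holds trivially (any choice of $\alpha_1,\alpha_2$ works). In particular this also proves the final claim that $(TM,G)$ is flat. So the real content is the \emph{only if} direction.

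For the only-if direction, I would assume $(TM,G)$ satisfies (\ref{weak}) for all lifted vector fields and specialize the lifts so that only a few tractable components survive. The natural first substitution is $\widetilde{X}=X^{h}$, $\widetilde{Y}=Y^{v}$, $\widetilde{Z}=Z^{v}$, $\widetilde{W}=W^{h}$, and to keep the algebra manageable I would extract only the \emph{vertical} part of the resulting identity. The left-hand side is expanded as $v(\widetilde{\nabla}_{W^h}\widetilde{R}(X^h,Y^v)Z^v)-v(\widetilde{R}(\widetilde{\nabla}_{W^h}X^h,Y^v)Z^v)-v(\widetilde{R}(X^h,\widetilde{\nabla}_{W^h}Y^v)Z^v)-v(\widetilde{R}(X^h,Y^v)\widetilde{\nabla}_{W^h}Z^v)$, using the connection formulas with the tensors $A,B,C,D$ and the $\widetilde{R}$-formulas from Theorem \ref{TH}. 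The right-hand side of (\ref{weak}), projected onto the vertical, uses the $\widetilde{R}$-formulas together with the orthogonality relations of $G$ to pair the sharp of $\alpha_2$ against the horizontal/vertical parts.

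After this master identity is in place, the strategy is to reduce it to statements about $R$ alone by substituting the fibre coordinate $t$ into the free slots. First I would set $Y=t$, then separately $Z=t$; the two resulting equations, after putting $Y=Z$ in the second and adding them, kill many of the higher-order $R\circ R$ terms by skew-symmetry and by using $R(\,\cdot\,,t)t$ cancellations. A further substitution $Z=t$ in the sum isolates the coefficient of $\alpha_2(t^v)$ as a purely algebraic expression in $R$. Then interchanging $X\leftrightarrow W$, subtracting, and applying the first Bianchi identity collapses it to $\alpha_2(t^v)R(X,W)t=0$. If $\alpha_2(t^v)\neq 0$, one immediately concludes $R(X,W)t=0$ for all $X,W,t$, which forces $R=0$ by polarization in $t$.

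The main obstacle is the residual case $\alpha_2(t^v)\equiv 0$. Here the identity degenerates to one relating $\alpha_2(Z^v)R(X,W)t$ with the $R$-contractions of the tensor $C(t,W,Z)=-\frac{a^2}{2\alpha}R(Z,t)W$. I would again exchange $X\leftrightarrow W$, subtract, and use Bianchi to obtain a clean identity, then take the $g$-inner product with $t$. After collecting, the only surviving term is proportional to $g(R(X,t)R(Z,t)W,t)-g(R(W,t)R(Z,t)X,t)$; specializing to $W=t$, $Z=X$ gives $\frac{a^2b^2}{2\alpha^2}\|R(X,t)t\|^2=0$. Provided $b\neq 0$, this yields $R(X,t)t=0$ and hence $R=0$. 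The remaining subcase $b=0$ (so $\alpha=a(a+c)$) must be handled separately: the identity simplifies to $\alpha_2(Z^v)R(X,W)t=-R(X,C(t,W,Z))t$, and setting $W=X$, using the explicit form of $C$, and taking $g$-product with $Z$ produces $g(R(X,R(t,Z)X)t,Z)=\|R(t,Z)X\|^2=0$ (after using the curvature symmetries), forcing $R=0$. Thus in every case $(M,g)$ is flat, and the corollary about recurrent, pseudo-symmetric, and locally symmetric cases follows by setting $\alpha_2=0$ or $\alpha_1=2\alpha_2$ in (\ref{weak}).
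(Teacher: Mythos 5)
Your proposal is correct and follows essentially the same route as the paper: the corollary is obtained from the weakly-symmetric theorem by specializing $\alpha_2=0$ (recurrent), $\alpha_1=2\alpha_2$ (pseudo-symmetric), or all forms zero (locally symmetric), with the converse direction immediate from Theorem \ref{TH}. The intermediate reduction you describe (vertical part of the $X^h,Y^v,Z^v,W^h$ substitution, the $Y=t$ and $Z=t$ specializations, the Bianchi-identity cancellations, and the two subcases $\alpha_2(t^v)=0$ and $b=0$) reproduces the paper's proof of the main theorem step for step.
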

Considering $a=1$ and $b=c=0$ in (\ref{metr}) we get the results of \cite{BC}, \cite{BT} for the Sasakian lift metric $g^s$.
\begin{cor}
$(TM, g^s)$ is weakly symmetric (recurrent or pseudo-symmetric or locally symmetric) Riemannian manifold if and only if the base manifold $(M, g)$ is flat. Hence $(TM, G)$ is flat.
\end{cor}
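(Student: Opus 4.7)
The sufficiency direction is immediate: if $R=0$ on $M$, then every bracketed expression in Theorem \ref{TH} vanishes, so $\widetilde{R}=0$ on $TM$, and (\ref{weak}) holds trivially with all $1$-forms and $A$ equal to zero. Thus the real content is the necessity direction. The plan is to feed carefully chosen lifted vector fields into the weak-symmetry identity (\ref{weak}) on $TM$, expand both sides via Theorem \ref{TH} and the formulas for $A,B,C,D$, and then peel off enough scalar consequences (by contracting with $t$, using the first Bianchi identity, and taking $g$-products) to force $R\equiv 0$.

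The substitution I would start with is $\widetilde{X}=X^h$, $\widetilde{Y}=Y^v$, $\widetilde{Z}=Z^v$, $\widetilde{W}=W^h$, and I would extract only the \emph{vertical} component of (\ref{weak}) at a point of $TM$. The right-hand side then assembles from the explicit formulas for $\widetilde{R}(X^h,Y^v)Z^v$, $\widetilde{R}(W^h,Y^v)Z^v$, $\widetilde{R}(X^h,W^h)Z^v$, $\widetilde{R}(X^h,Y^v)W^h$, producing a polynomial expression in $R$ multiplied by the coefficients $\alpha_1(W^h)$, $\alpha_2(X^h)$, $\alpha_2(Y^v)$, $\alpha_2(Z^v)$ and the sharp vector $\alpha_2^\sharp$. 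For the left-hand side, I expand $(\widetilde{\nabla}_{W^h}\widetilde{R})(X^h,Y^v)Z^v$ using the product rule and the Levi-Civita formulas of $\widetilde{\nabla}$, again retaining only the vertical component. Both sides are long, but the key observation is that, once equated, every term involving $\alpha_2^\sharp$ lies in a specific combination $(a+c)$ times the horizontal contraction plus $b$ times the vertical one, which is manageable.

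The decisive trick is to specialize $Y=t$, then separately $Z=t$, in the resulting identity. This collapses many curvature compositions (because $R(\cdot,t)t$ terms simplify), and symmetrizing the $Y=t$ equation with the $Z=t$ equation (after setting $Y=Z$) yields an equation where the coefficient of $\alpha_2(t^v)$ contains exactly $R(X,W)t$. Interchanging $X$ and $W$ and subtracting, then applying the first Bianchi identity, should collapse the result to
\[
\alpha_2(t^v)\,R(X,W)t=0.
\]
If $\alpha_2(t^v)\neq 0$ at some point, then $R(X,W)t=0$ for all $X,W$, which by polarization in $t$ forces $R\equiv 0$.

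The main obstacle is the remaining case $\alpha_2(t^v)=0$, where the previous identity becomes vacuous and I am left with an equation coupling $\alpha_2(Z^v)R(X,W)t$ to curvature terms of the form $R(X,C(t,W,Z))t$ and $R(X,t)C(t,W,Z)$. Here I would again antisymmetrize in $X\leftrightarrow W$, invoke Bianchi, and then take the $g$-product with $t$ to eliminate $\alpha_2(Z^v)$ entirely, producing a pure curvature identity. Substituting back the definition $C(t,X,Y)=-\tfrac{a^2}{2\alpha}R(Y,t)X$ and choosing $W=t,\ Z=X$ should reduce the identity (when $b\neq 0$) to $|R(X,t)t|_g^2=0$, whence $R(X,t)t=0$ and so $R=0$. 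The residual case $b=0$ must be treated separately, with $\alpha=a(a+c)$ and a direct argument from the simplified version of (\ref{N}): setting $W=X$ yields $R(X,R(t,Z)X)t=0$, and pairing with $Z$ through $g$ forces $R(t,Z)X=0$, giving again $R\equiv 0$. The entire subtlety of the proof lies in choosing the substitutions and contractions in the correct order so that the linear combinations of $\alpha_1$ and $\alpha_2$ drop out before the curvature-only identities are exploited.
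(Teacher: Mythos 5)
Your argument is correct in substance, but it is not the paper's proof of this corollary --- it is essentially the paper's proof of the main theorem, transplanted here. The paper obtains this corollary in one line, by specializing the already-established general result (the main theorem together with the first corollary) to the parameter values $a=1$, $b=c=0$, for which $G$ reduces to the Sasaki lift $g^s$. Your route instead re-runs the whole computation: the substitution $\widetilde{X}=X^h$, $\widetilde{Y}=Y^v$, $\widetilde{Z}=Z^v$, $\widetilde{W}=W^h$, the extraction of the vertical part, the specializations $Y=t$ and $Z=t$, the antisymmetrization in $X\leftrightarrow W$ combined with the first Bianchi identity to reach $\alpha_2(t^v)R(X,W)t=0$, and the two-case analysis on $\alpha_2(t^v)$ --- all of this reproduces, step for step, the proof of the general theorem. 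That is fine as far as it goes, but note that for the Sasaki metric $b=0$ and $\alpha=a(a+c)=1$, so your $b\neq 0$ branch (the one ending in $\vert R(X,t)t\vert^2=0$) is vacuous here; only the $b=0$ branch, with $C(t,W,Z)=-\tfrac{1}{2}R(Z,t)W$ and the final pairing $g(R(X,R(t,Z)X)t,Z)=\vert R(t,Z)X\vert^2$, is actually exercised. What the paper's route buys is economy: the corollary is free once the theorem is proved. What your route buys is only the explicit identification of which branch of the general argument is active for $g^s$; if you intend to prove just the Sasaki case, you should set $b=0$ from the outset and discard the irrelevant terms, which shortens the computation considerably.
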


\noindent
Esmaeil Peyghan\\
Department of Mathematics, Faculty  of Science\\
Arak University\\
Arak 38156-8-8349.  Iran\\
Email: epeyghan@gmail.com

\end{document}